% user docs for compositio.cls
 % date this file last revised
    % version of compositio.cls documented

\documentclass{compositio}
% If you have the AMSLaTeX distribution installed on your system,
% please delete the "[noams]" option above.
\usepackage{mathrsfs}
\usepackage{amsmath}
\usepackage{amssymb}
\usepackage{verbatim}
\usepackage{xcolor}
\usepackage{graphicx}
\usepackage{textcomp}
\usepackage[all]{xy}
\usepackage{float}
\numberwithin{equation}{section}

\theoremstyle{plain}
\newtheorem{theorem}{Theorem}[section]
\newtheorem{proposition}{Proposition}[section]

\newtheorem{lemma}{Lemma}[section]

\theoremstyle{definition}

\theoremstyle{remark}

% definitions specific to this author guide only

\begin{document}

\title{The balanced cone of the small resolution of the quintic conifold}

\author{Jixiang Fu}
\email{majxfu@fudan.edu.cn}
\address{School of Mathematical Sciences, Fudan University, Shanghai 200433, People's Republic of China}

\author{Hongjie Wang}
\email{21110180020@m.fudan.edu.cn}
\address{School of Mathematical Sciences, Fudan University, Shanghai 200433, People's Republic of China}

\classification{32J25, 32Q15}
\keywords{balanced cone, small resolution, quintic conifold, K\"ahler cone}
\thanks{Fu is supported by NFSC, grant 12141104.}

\begin{abstract}
	In this note, we use the intersection number to determine explicitly the balanced cone of the small resolution of the quintic conifold.
\end{abstract}
\maketitle

\section{Introduction}
For a compact Kähler manifold X, we can define its Kähler cone and balanced cone as follows.
\begin{equation*}
	\begin{aligned}
		\mathcal{K}(X) &= \{\alpha \in H^{1,1}(X,\mathbb{R}) \mid \alpha \text{ admits a Kähler metric representative}\}, \\
		\mathcal{B}(X) &= \{\alpha \in H^{n-1,n-1}(X,\mathbb{R}) \mid \alpha \text{ admits a balanced metric representative}\}.
	\end{aligned}
\end{equation*}

There exists a natural map between these two cones, known as the balanced map, denoted by
\begin{equation*}
	\mathbf{b} : \mathcal{K}(X) \rightarrow \mathcal{B}(X), \quad \alpha \mapsto \alpha^{n-1}.
\end{equation*}
Fu and Xiao \cite{fu2014relations} proved that the balanced map is injective but not necessarily surjective. In fact, they provided numerous examples whose balanced maps are not surjective, including the small resolution of the quintic conifold.

As an exercise, this paper characterizes the balanced cone of this manifold, thereby comparing the image of its K\"ahler cone under the balanced map with the balanced cone. This result makes the investigation of non-K\"ahler canonical metrics on the small resolution of the quintic conifold possible.

Denote by $Y$ the small resolution of the quintic conifold in $\mathbb{P}^4$, with the embedding $Y \subset \hat{\mathbb{P}}^4 \subset \mathbb{P}^4 \times \mathbb{P}^1$, where $\hat{\mathbb{P}}^4$ is the blow-up of $\mathbb{P}^4$ along the submanifold $\mathbb{P}^2$. In this paper, we establish the following result.  

\begin{theorem}  
	The balanced cone $\mathcal{B}(Y)$ of $Y$ is bounded by two rays generated by $\alpha \wedge \beta$ and $\beta \wedge \beta - \frac{1}{4} \alpha \wedge \beta$, respectively, where $\alpha$ and $\beta$ are the pullbacks to $Y$ of the K\"ahler classes associated with the Fubini-Study metrics on $\mathbb{P}^1$ and $\mathbb{P}^4$, respectively.  
\end{theorem}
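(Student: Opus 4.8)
\emph{Strategy and set-up.} The plan is to compute $H^{\bullet,\bullet}(Y)$ with its intersection form, read off $\mathcal{K}(Y)$, and then trap $\mathcal{B}(Y)$ between an upper bound coming from positivity of $\int_Y\omega^{2}\wedge[D]$ against effective divisors $D$, and a lower bound coming from transporting K\"ahler metrics off the \emph{other} small resolution of the conifold. Write $h$ and $e$ for the pullback of the hyperplane class of $\mathbb{P}^4$ and for the exceptional divisor of $\hat{\mathbb{P}}^4\to\mathbb{P}^4$; then $[Y]=5h-e$, and the blow-up formulas (for $\mathbb{P}^4$ blown up along a plane) give $H^{1,1}(Y,\mathbb{R})=\mathbb{R}\alpha\oplus\mathbb{R}\beta$ with $\alpha=(h-e)|_Y$ and $\beta=h|_Y$, whence $H^{2,2}(Y,\mathbb{R})=\mathbb{R}\,\alpha\beta\oplus\mathbb{R}\,\beta^2$ (since $\alpha^2=0$, being pulled back from $\mathbb{P}^1$), and the triple intersections $\beta^3=5$, $\alpha\beta^2=4$, $\alpha^2\beta=\alpha^3=0$. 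Both $\alpha$ and $\beta$ are nef (pullbacks of ample classes via $Y\to\mathbb{P}^1$ and $Y\to\mathbb{P}^4$) and each is on the boundary of $\mathcal{K}(Y)$ because it contracts a curve, while every $s\alpha+t\beta$ with $s,t>0$ restricts from an ample class on $\hat{\mathbb{P}}^4$; hence $\mathcal{K}(Y)=\{s\alpha+t\beta:s,t>0\}$, and $\mathbf{b}(\mathcal{K}(Y))$ is the open first quadrant in the coordinates $(\alpha\beta,\beta^2)$.

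\emph{The upper bound.} If $\omega$ is a balanced metric, then $\omega^{2}$ is a $d$-closed strictly positive $(2,2)$-form, so $\int_Y\omega^{2}\wedge[D]=\int_D\omega^{2}>0$ for every irreducible hypersurface $D\subset Y$. Applying this with $D$ a fibre of $Y\to\mathbb{P}^1$ (class $\alpha$) and with $D=E\cap Y$ (class $\beta-\alpha$, where $E$ is the exceptional divisor of $\hat{\mathbb{P}}^4$), and writing $\Omega=x\,\alpha\beta+y\,\beta^2\in\mathcal{B}(Y)$, the intersection numbers give $\Omega\cdot\alpha=4y>0$ and $\Omega\cdot(\beta-\alpha)=4x+y>0$. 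These two inequalities describe exactly the cone $C$ generated by $\alpha\beta$ and $4\beta^2-\alpha\beta=4(\beta^2-\tfrac14\alpha\beta)$; the constant $\tfrac14$ is $\int_{E\cap Y}\beta^2\big/\int_{E\cap Y}\alpha\beta=1/4$. Now $\mathcal{B}(Y)$ is a convex cone (by Michelsohn's lemma a sum of $d$-closed strictly positive $(n-1,n-1)$-forms is again the $(n-1)$-st power of a Hermitian metric, which is then balanced), it meets the interior of $C$, and it is contained in $\operatorname{int}C$; so it suffices for the reverse inclusion to show that the extreme ray through $4\beta^2-\alpha\beta$ lies in $\overline{\mathcal{B}(Y)}$, for then $\overline{\mathcal{B}(Y)}=C$ and $\mathcal{B}(Y)=\operatorname{int}C$.

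\emph{The lower bound.} Let $Y^{+}$ be the other small resolution of the quintic conifold $Q_0$, obtained by blowing up $\mathbb{P}^4$ along the quartic surface $S=\{x_3=g=0\}\subset Q_0$ rather than along the plane $\mathbb{P}^2=\{x_3=x_4=0\}$; then $Y^{+}$ is a smooth projective Calabi--Yau threefold, $Y\dashrightarrow Y^{+}$ is an isomorphism in codimension one flopping the sixteen $(-1,-1)$-curves $C_i$, and $H^2(Y)\cong H^2(Y^{+})$ canonically. Let $\alpha^{+},\beta^{+}$ be the classes of the proper transform of $\mathbb{P}^2$ and of the hyperplane pullback on $Y^{+}$. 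The flop relation $D_1^{+}D_2^{+}D_3^{+}=D_1D_2D_3-\sum_i(D_1\!\cdot\!C_i)(D_2\!\cdot\!C_i)(D_3\!\cdot\!C_i)$, with $\alpha\cdot C_i=1$ and $\beta\cdot C_i=0$, yields $(\alpha^{+})^3=9$, $(\alpha^{+})^2\beta^{+}=-3$, $\alpha^{+}(\beta^{+})^2=1$, $(\beta^{+})^3=5$ (consistent with $N_{\mathbb{P}^2/Y^{+}}=\mathcal{O}_{\mathbb{P}^2}(-3)$), from which $\mathcal{K}(Y^{+})=\{s\,\beta^{+}+t(\alpha^{+}+3\beta^{+}):s,t>0\}$, the far wall $\alpha^{+}+3\beta^{+}$ effecting the divisorial contraction of that $\mathbb{P}^2$. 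For a K\"ahler metric $\omega^{+}$ on $Y^{+}$ the form $(\omega^{+})^{2}$ restricts to a $d$-closed strictly positive $(2,2)$-form on $Y\setminus\bigcup C_i$; extending it across the $C_i$ by Skoda--El Mir and regularizing produces a balanced class $T$ on $Y$ with $T\cdot\eta=(\omega^{+})^{2}\cdot\phi(\eta)$ for $\eta\in H^2(Y)$, where $\phi\colon H^2(Y)\to H^2(Y^{+})$ is the natural isomorphism ($\phi(\alpha)=\beta^{+}-\alpha^{+}$, $\phi(\beta)=\beta^{+}$). Running this over $\omega^{+}\in\mathcal{K}(Y^{+})$ and feeding in the intersection numbers above, one checks that the transported cone is precisely the region bounded by $\beta^2$ and $4\beta^2-\alpha\beta$; together with $\mathbf{b}(\mathcal{K}(Y))$ this sweeps out all of $\operatorname{int}C$, and in particular puts $4\beta^2-\alpha\beta$ in $\overline{\mathcal{B}(Y)}$, completing the proof.

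\emph{Where the difficulty lies.} The delicate step is the transport, since $\mathcal{B}(Y)$ is defined through genuine smooth balanced metrics: one must show that the closed positive current extending $(\omega^{+})^{2}$ across the flopping curves has vanishing Lelong numbers along the $C_i$, so that a Demailly-type regularization keeps it strictly positive inside its class, or else glue $(\omega^{+})^{2}$ directly to the explicit $SU(3)$ metric on the local resolved-conifold model near each $C_i$. Everything else --- the cohomology ring, the two K\"ahler cones, and the transported cone --- is the intersection-number bookkeeping sketched above, so the whole argument is, as advertised, driven by the intersection numbers of $Y$ and $Y^{+}$.
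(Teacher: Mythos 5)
Your intersection-number bookkeeping is correct and matches the paper's Lemma 3.1 ($\beta^3=5$, $\alpha\beta^2=4$, $\alpha^2\beta=0$), and your ``upper bound'' is sound: pairing a balanced class $x\,\alpha\beta+y\,\beta^2$ against the two effective divisors of classes $\alpha$ and $\beta-\alpha$ (the latter is the paper's prime divisor $E_1=[x_3=x_4=0]$, whose irreducibility you should not just assert) gives $y>0$ and $4x+y>0$, i.e.\ containment in the open cone bounded by $\alpha\beta$ and $\beta^2-\tfrac14\alpha\beta$. This is where your route diverges from the paper, which instead determines the entire pseudo-effective cone $\mathcal{E}(Y)=\mathrm{span}_{\mathbb{R}_{\geqslant 0}}\{\alpha,\beta-\alpha\}$ (its Lemma 4.1 rules out any further prime divisor classes) and then gets \emph{both} inclusions at once from the chain $\overline{\mathcal{B}(Y)}=\mathcal{M}(Y)=\mathcal{E}(Y)^{\vee}$ supplied by Fu--Xiao and BDPP/Toma.

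The genuine gap is in your lower bound. The entire difficulty of the theorem is concentrated in showing that classes near $4\beta^2-\alpha\beta$ actually contain balanced representatives, and your mechanism for this --- transporting $(\omega^{+})^{2}$ across the flop by Skoda--El Mir extension and ``a Demailly-type regularization [that] keeps it strictly positive'' --- is not an available tool: Demailly regularization is a $(1,1)$-statement, and for closed positive $(2,2)$-currents there is no general smoothing that preserves both closedness and strict positivity within the class; the alternative of gluing to the local resolved-conifold metric is likewise a substantive construction, not a remark. You have flagged this yourself as ``the delicate step,'' but flagging it does not discharge it, and everything downstream (that the transported cone fills the region between $\beta^2$ and $4\beta^2-\alpha\beta$, hence $\operatorname{int}C\subseteq\mathcal{B}(Y)$ by convexity) depends on it. The repair is exactly the paper's citation: for the projective threefold $Y$ one has $\overline{\mathcal{B}(Y)}=\mathcal{M}(Y)$ (Fu--Xiao) and $\mathcal{M}(Y)=\mathcal{E}(Y)^{\vee}$ (BDPP, Toma), after which only $\mathcal{E}(Y)$ needs computing --- and note that your flop picture is really a disguised computation of $\mathcal{E}(Y)^\vee$ as the union of the nef cones of the two birational models, with $\phi^{-1}(\alpha^{+}+3\beta^{+})=4\beta-\alpha=[E_2]$ recovering the paper's second prime divisor. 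Two smaller slips: the surface $\{x_3=g=0\}$ is not contained in $\tilde{Y}$ (you want $\{x_3=h=0\}$, $\{x_4=g=0\}$ or $\{g=h=0\}$), and the claim that $\alpha^{+}+3\beta^{+}$ is nef and spans the far wall of $\mathcal{K}(Y^{+})$ is asserted rather than proved.
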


\begin{figure}[H]
	\centering
	\includegraphics[width=0.55\textwidth]{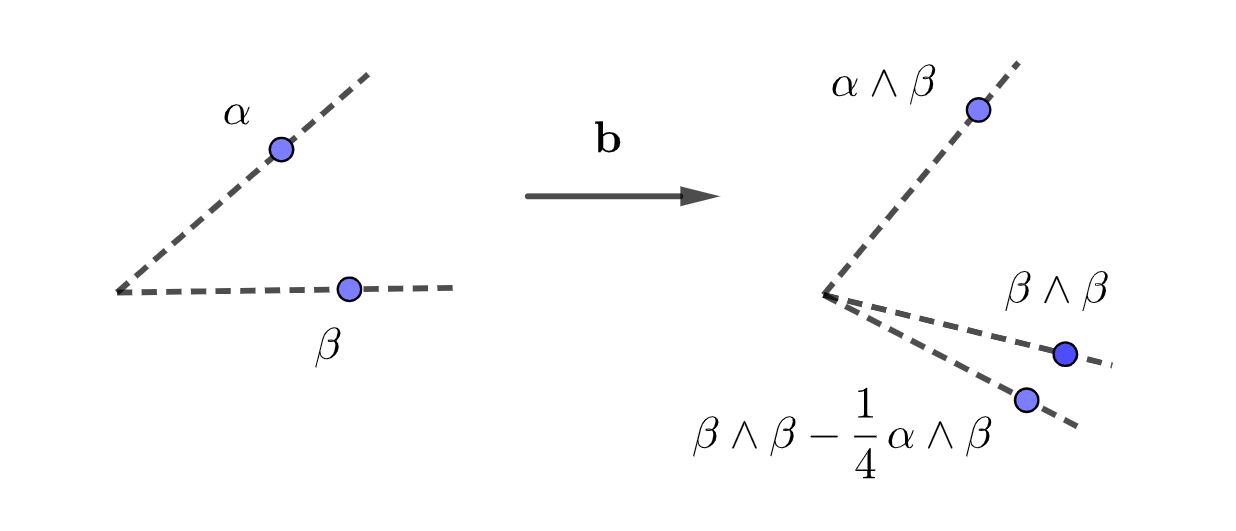}
	\caption{Image of the balanced map of $Y$}
\end{figure}

In Figure 1, the map $\mathbf{b}$ represents the balanced map. The cone on the left denotes the K\"ahler cone $\mathcal{K}(Y)$ of $Y$, which is an open convex cone bounded by the classes $\alpha$ and $\beta$. On the right, the larger cone represents the balanced cone $\mathcal{B}(Y)$ of $Y$, which is an open convex cone bounded by the classes $\alpha \wedge \beta$ and $\beta \wedge \beta - \frac{1}{4} \alpha \wedge \beta$. The open convex cone on the right, bounded by the classes $\alpha \wedge \beta$ and $\beta \wedge \beta$, represents $\mathbf{b}(\mathcal{K}(Y))$. In this way, the difference between $\mathbf{b}(\mathcal{K}(Y))$ and $\mathcal{B}(Y)$ becomes clear.

Since $Y$ is a projective manifold, according to \cite{fu2014relations}, the closure of $\mathcal{B}(Y)$ coincides with the movable cone $\mathcal{M}(Y)$. On the other hand, by \cite{BDPP} and \cite{Toma10}, the movable cone $\mathcal{M}(Y)$ is the dual cone of the pseudo-effective cone $\mathcal{E}(Y)$. Consequently, we obtain the boundary of $\mathcal{B}(Y)$ from the boundary of $\mathcal{E}(Y)$. Clearly, $\alpha$ is a generator of the boundary of $\mathcal{E}(Y)$, and $\alpha$ corresponds to a generator $\alpha \wedge \beta$ of the boundary of $\mathcal{B}(Y)$. To find the generator of the other boundary ray of $\mathcal{B}(Y)$, we need to determine the generator of the other boundary ray of $\mathcal{E}(Y)$.

Previous works on computing the pseudo-effective cone and the movable cone of projective manifolds include   
\cite{GrassModulionCurve}, \cite{StableMapModuli}, \cite{planemuduli}, \cite{canonicalmoduli}, \cite{proModuli}, \cite{canonicalModuli*}, \cite{stablemoduli*}, \cite{quadricmoduli}, and \cite{nefmoduli*},  
where the authors carefully considered specific moduli spaces.   
In particular, \cite{canonicalModuli*} and \cite{nefmoduli*} determine the pseudo-effective cone using the classification of objects in the moduli spaces and subsequently derive the movable cone.   
Meanwhile, \cite{proModuli} considers the case where the numerically effective cone coincides with the pseudo-effective cone.   
Other notable contributions come from \cite{divisorialZariski} and \cite{BlowupPro},  
where \cite{divisorialZariski} characterizes the movable cone of hyper-Kähler manifolds,  
and \cite{BlowupPro} investigates properties of the effective cone of blow-ups of projective spaces.

The rest of the paper is organized as follows. In Section 2, manifold $Y$ is briefly introduced.   
In Section 3, the necessary intersection numbers is computed.   
In Section 4, a generator of the other boundary ray of $\mathcal{E}(Y)$ is determined, and a generator of the other boundary ray of $\mathcal{B}(Y)$ is found by the cone duality.

%第二节，基本的背景与设定 
\section{Small resolution of the quintic conifold}

In this paper, we consider the small resolution of the quintic hypersurface conifold in $\mathbb{P}^4$.  
Let  
\begin{equation*}  
	x = [x_0:x_1:x_2:x_3:x_4]  
\end{equation*}  
be the homogeneous coordinates in $\mathbb{P}^4$, and let $g$ and $h$ be generic quartic homogeneous polynomials in these variables. The singular hypersurface in $\mathbb{P}^4$ defined by $g$ and $h$ is given by  
\begin{equation*}  
	\tilde{Y} = \{x \in \mathbb{P}^4 \mid x_3g(x_0,...,x_4) + x_4h(x_0,...,x_4) = 0\},  
\end{equation*}  
and its singular locus  
\begin{equation*}  
	\mathrm{Sing}(\tilde{Y}) = \{x \in \mathbb{P}^4 \mid x_3 = x_4 = g(x) = h(x) = 0\}  
\end{equation*}  
consists of 16 points (see \cite{rossi2006geometric}).  

Blowing up $\mathbb{P}^4$ along the subvariety  
\begin{equation*}  
	\mathbb{P}^2 = \{x \in \mathbb{P}^4 \mid x_3 = x_4 = 0\}  
\end{equation*}  
yields a submanifold $\hat{\mathbb{P}}^4$ of $\mathbb{P}^4 \times \mathbb{P}^1$, and we denote the blow-up morphism by  
\begin{equation*}  
	\pi: \hat{\mathbb{P}}^4 \rightarrow \mathbb{P}^4.  
\end{equation*}  
Then the subvariety $Y = \overline{\pi^{-1}(\tilde{Y} - \mathrm{Sing}(\tilde{Y}))}$ of $\hat{\mathbb{P}}^4$ is a three-dimensional complex submanifold of $\mathbb{P}^4(x) \times \mathbb{P}^1(y)$, where $y = [y_0:y_1]$ are the homogeneous coordinates of $\mathbb{P}^1$, and $\pi$ induces a small resolution of $\tilde{Y}$  
\begin{equation*}  
	\pi|_{Y}: Y \rightarrow \tilde{Y}.  
\end{equation*}  
Note that  
\begin{equation}\label{definition_Y}  
	Y = \{(x,y) \in \mathbb{P}^4 \times \mathbb{P}^1 \mid y_0x_4 - y_1x_3 = y_0g(x) + y_1h(x) = 0\}.  
\end{equation}  
The following fact about $Y$ is known (see \cite{rossi2006geometric}).  

\begin{proposition}  
	$Y$ is a three-dimensional Calabi-Yau manifold, and $h^{1,1}(Y) = h^{1,1}(\hat{\mathbb{P}}^4) = h^{1,1}(\mathbb{P}^4 \times \mathbb{P}^1) = 2$.  
\end{proposition}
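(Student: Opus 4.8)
The plan is to verify each of the three asserted cohomological facts separately, using the explicit description $Y\subset\hat{\mathbb P}^4\subset\mathbb P^4\times\mathbb P^1$ from \eqref{definition_Y}. First I would establish the Calabi--Yau property. Since $\pi|_Y\colon Y\to\tilde Y$ is a small resolution of a nodal quintic threefold, $Y$ is a smooth compact threefold, so it remains to compute $K_Y$. I would do this by adjunction inside the smooth fourfold $\hat{\mathbb P}^4$: writing $Y$ as the zero locus of the section $(y_0x_4-y_1x_3,\ y_0g(x)+y_1h(x))$ of an explicit rank-two bundle on $\mathbb P^4\times\mathbb P^1$ — concretely $\mathcal O(1,1)\oplus\mathcal O(4,1)$ in the $(\mathbb P^4,\mathbb P^1)$ bidegree notation, suitably restricted/twisted on $\hat{\mathbb P}^4$ — one gets $K_Y=(K_{\hat{\mathbb P}^4}\otimes\det N)|_Y$, and a direct check that the bidegrees cancel against $K_{\mathbb P^4\times\mathbb P^1}=\mathcal O(-5,-2)$ pulled back to $\hat{\mathbb P}^4$ gives $K_Y=\mathcal O_Y$. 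Simply connectedness of $Y$ follows from that of $\hat{\mathbb P}^4$ together with the Lefschetz-type hyperplane argument for the complete intersection (or directly from the known topology of small resolutions of nodal quintics), so $Y$ is a genuine Calabi--Yau threefold.

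Next I would compute the three Hodge numbers $h^{1,1}$. The cleanest route is to note that $\pi\colon\hat{\mathbb P}^4\to\mathbb P^4$ is the blow-up along the smooth center $\mathbb P^2$, whence $H^2(\hat{\mathbb P}^4,\mathbb Z)=\pi^*H^2(\mathbb P^4,\mathbb Z)\oplus\mathbb Z[E]$ has rank $2$ and is of Hodge type $(1,1)$; equivalently, since $\hat{\mathbb P}^4\subset\mathbb P^4\times\mathbb P^1$ is itself a hypersurface (the blow-up realized as $\{y_0x_4=y_1x_3\}$), Lefschetz gives $H^2(\hat{\mathbb P}^4)\cong H^2(\mathbb P^4\times\mathbb P^1)$, which has rank $2$ generated by the two hyperplane pullbacks $\alpha,\beta$. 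For $Y$ itself, $Y\subset\hat{\mathbb P}^4$ is the zero locus of a section of a rank-two globally generated bundle whose restriction is ample-enough; the Lefschetz hyperplane theorem (in the form valid for complete intersections of ample divisors, or via the two successive hyperplane sections $\{y_0x_4=y_1x_3\}$ and $\{y_0g+y_1h=0\}$ inside $\mathbb P^4\times\mathbb P^1$) then yields $H^2(Y,\mathbb Q)\cong H^2(\hat{\mathbb P}^4,\mathbb Q)$, so $b_2(Y)=2$. Since $Y$ is a smooth projective threefold all of $H^2$ is $(1,1)$, giving $h^{1,1}(Y)=2$, and the same Lefschetz comparison along the chain $\mathbb P^4\times\mathbb P^1\to\hat{\mathbb P}^4\to Y$ identifies all three groups, establishing the string of equalities $h^{1,1}(Y)=h^{1,1}(\hat{\mathbb P}^4)=h^{1,1}(\mathbb P^4\times\mathbb P^1)=2$.

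The main obstacle is justifying the Lefschetz-type isomorphism $H^2(Y)\cong H^2(\hat{\mathbb P}^4)$: $Y$ is not literally an ample hypersurface in $\hat{\mathbb P}^4$, and the small resolution has replaced $16$ nodes by $16$ rational curves, so one must be careful that no new $(1,1)$-classes appear. I would handle this either by invoking the known result (cited as \cite{rossi2006geometric}) that the small resolution of a generic nodal quintic with this specific configuration of $16$ conifold points has $b_2=2$ — the defining $16$ nodes lie on the plane $\mathbb P^2$ and are resolved \emph{simultaneously} by the single blow-up of $\mathbb P^4$ along $\mathbb P^2$, so the exceptional curves are all rationally equivalent and contribute only the one class already present — or, more self-containedly, by working on $\hat{\mathbb P}^4$ where $Y$ \emph{is} cut out by a section of a sufficiently positive rank-two bundle and applying the Sommese--Shiffman version of the Lefschetz hyperplane theorem for sections of ample vector bundles. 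Once $b_2(Y)=2$ is in hand, the identification of generators with $\alpha$ and $\beta$ (equivalently with $\pi^*H^2(\mathbb P^4)$ and $[E]$) is immediate, and the proposition follows.
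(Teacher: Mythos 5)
The paper does not prove this proposition at all: it simply quotes it as a known fact from \cite{rossi2006geometric}. Your proposal therefore supplies something the paper omits, namely a self-contained argument, and the argument is essentially sound. The adjunction computation is correct: $Y$ is the zero locus of a section of $\mathcal{O}(1,1)\oplus\mathcal{O}(4,1)$ on $\mathbb{P}^4\times\mathbb{P}^1$, and $\det N = \mathcal{O}(5,2)$ cancels $K_{\mathbb{P}^4\times\mathbb{P}^1}=\mathcal{O}(-5,-2)$, so $K_Y=\mathcal{O}_Y$. Your worry about the Lefschetz step is in fact unnecessary: both $\mathcal{O}(1,1)$ and $\mathcal{O}(4,1)$ are ample on $\mathbb{P}^4\times\mathbb{P}^1$, so $\hat{\mathbb{P}}^4=\{y_0x_4=y_1x_3\}$ is an ample hypersurface in the $5$-fold and $Y$ is an ample hypersurface in $\hat{\mathbb{P}}^4$ (the restriction of the ample class $\tilde\alpha+4\tilde\beta$); two applications of the ordinary Lefschetz hyperplane theorem, or one application of Sommese's version for the ample rank-two bundle, give $H^2(\mathbb{P}^4\times\mathbb{P}^1)\cong H^2(\hat{\mathbb{P}}^4)\cong H^2(Y)$ in degrees $\leq 2$, and simple connectedness of $Y$ as well. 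The concern about the $16$ exceptional curves is a red herring for $h^{1,1}$: small resolutions of nodes add classes to $H_2$ (curve classes) and potentially to $H^4$, not to $H^2$ of a threefold, and in any case your Lefschetz computation already settles $b_2(Y)=2$. One genuine slip: the assertion that ``since $Y$ is a smooth projective threefold all of $H^2$ is $(1,1)$'' is false in general (a projective threefold can have $h^{2,0}>0$); the correct justification is that the Lefschetz restriction is an isomorphism of Hodge structures, so $h^{2,0}(Y)=h^{2,0}(\mathbb{P}^4\times\mathbb{P}^1)=0$. With that repair, your proof is complete and arguably preferable to the paper's bare citation, since the same complete-intersection description is exactly what the paper exploits in Lemma \ref{intersection_number}.
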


\section{Computations of intersection numbers}

First, we fix some notation.  

Denote the inclusion map  
\begin{equation*}  
	i: Y \hookrightarrow \mathbb{P}^4 \times \mathbb{P}^1,  
\end{equation*}  
and the projection maps  
\begin{equation*}  
	\begin{aligned}  
		\pi_1 &: \mathbb{P}^4 \times \mathbb{P}^1 \rightarrow \mathbb{P}^4, \\  
		\pi_2 &: \mathbb{P}^4 \times \mathbb{P}^1 \rightarrow \mathbb{P}^1.  
	\end{aligned}  
\end{equation*}  

Let  
\begin{equation*}  
	\begin{aligned}  
		\alpha &:= i^*\tilde{\alpha} := i^*\pi_2^*[\omega_{FS, \, \mathbb{P}^1}], \\  
		\beta &:= i^*\tilde{\beta} := i^*\pi_1^*[\omega_{FS, \, \mathbb{P}^4}],  
	\end{aligned}  
\end{equation*}  
where $\omega_{FS}$ is the K\"ahler form corresponding to the Fubini-Study metric in the respective projective spaces.  

Clearly, $\alpha$ and $\beta$ form a basis for $H^{1,1}(Y, \mathbb{R})$,   
and they are generators of the two boundary rays of the K\"ahler cone $\mathcal{K}(Y)$, respectively.   
Furthermore,  
\begin{equation*}  
	\beta \in \mathcal{E}^\circ(Y), \quad   
	\beta \wedge \beta \in \mathcal{B}(Y), \quad   
	\alpha \wedge \beta \in \partial\mathcal{B}(Y).  
\end{equation*}  

We have the following lemma on intersection numbers.  
\begin{lemma}\label{intersection_number}  
	\begin{equation*}  
		\begin{aligned}  
			&\int_Y \alpha \wedge \beta \wedge \beta = 4, \\  
			&\int_Y \beta \wedge \beta \wedge \beta = 5.  
		\end{aligned}  
	\end{equation*}  
\end{lemma}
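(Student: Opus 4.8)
The plan is to compute both intersection numbers by pushing everything forward to the ambient space $\mathbb{P}^4 \times \mathbb{P}^1$ and using the fact that $Y$ is cut out by two equations whose defining classes are easy to read off. From the description \eqref{definition_Y}, $Y$ is the complete intersection in $\mathbb{P}^4 \times \mathbb{P}^1$ of the hypersurface $\{y_0 x_4 - y_1 x_3 = 0\}$, which is bilinear of bidegree $(1,1)$ and hence represents $\tilde\alpha + \tilde\beta$, and the hypersurface $\{y_0 g(x) + y_1 h(x) = 0\}$, which is of bidegree $(4,1)$ and hence represents $4\tilde\beta + \tilde\alpha$. Therefore the fundamental class of $Y$ in $\mathbb{P}^4 \times \mathbb{P}^1$ is $[Y] = (\tilde\alpha + \tilde\beta)\wedge(\tilde\alpha + 4\tilde\beta)$, and by the projection formula
\begin{equation*}
	\int_Y \alpha^a \wedge \beta^b = \int_{\mathbb{P}^4 \times \mathbb{P}^1} \tilde\alpha^a \wedge \tilde\beta^b \wedge (\tilde\alpha + \tilde\beta)\wedge(\tilde\alpha + 4\tilde\beta).
\end{equation*}

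The next step is to expand the right-hand side using the cohomology ring of $\mathbb{P}^4 \times \mathbb{P}^1$, in which $\tilde\alpha^2 = 0$ (as $\tilde\alpha$ is pulled back from $\mathbb{P}^1$) and $\tilde\beta^5 = 0$, with the normalization $\int_{\mathbb{P}^4 \times \mathbb{P}^1} \tilde\alpha \wedge \tilde\beta^4 = 1$. For $\int_Y \alpha \wedge \beta \wedge \beta$ I would take $a = 1$, $b = 2$: expanding $(\tilde\alpha + \tilde\beta)(\tilde\alpha + 4\tilde\beta) = \tilde\alpha^2 + 5\tilde\alpha\tilde\beta + 4\tilde\beta^2 = 5\tilde\alpha\tilde\beta + 4\tilde\beta^2$, multiplying by $\tilde\alpha\tilde\beta^2$ and killing every term with $\tilde\alpha^2$ or $\tilde\beta^5$ leaves $4\,\tilde\alpha\tilde\beta^4$, which integrates to $4$. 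For $\int_Y \beta^3$ I take $a = 0$, $b = 3$: multiplying $5\tilde\alpha\tilde\beta + 4\tilde\beta^2$ by $\tilde\beta^3$ and discarding $\tilde\beta^5$ leaves $5\,\tilde\alpha\tilde\beta^4$, which integrates to $5$. This gives exactly the two asserted values.

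The only genuine point requiring care — and the one I would present most carefully — is the justification that $[Y]$ really equals the product of the two hyperplane-type classes, i.e. that the scheme-theoretic complete intersection of the two equations is reduced and coincides with the manifold $Y$ constructed in Section 2 via the small resolution, with no extra components or multiplicity. This follows from genericity of $g$ and $h$: one checks that the two equations meet transversally (so the intersection is smooth of the expected dimension three and in particular reduced), and that the resulting smooth threefold is connected, hence irreducible, and therefore must be $Y$ itself since $Y$ sits inside it with the same dimension. Once this identification is in hand, everything else is the bookkeeping in the ambient cohomology ring sketched above; the self-intersection $\tilde\alpha^2 = 0$ is what makes the computation collapse so quickly.
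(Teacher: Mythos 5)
Your proposal is correct and follows essentially the same route as the paper: both identify $[Y]$ with $c_1(\mathcal{O}_X(V_1))\wedge c_1(\mathcal{O}_X(V_2)) = (\tilde\alpha+\tilde\beta)\wedge(\tilde\alpha+4\tilde\beta)$ and then expand in the cohomology ring of $\mathbb{P}^4\times\mathbb{P}^1$ using $\tilde\alpha^2=0$ and $\int_X\tilde\alpha\wedge\tilde\beta^4=1$. Your explicit attention to why the complete intersection of $V_1$ and $V_2$ is reduced and equals $Y$ is a point the paper passes over more quickly, but the substance of the argument is the same.
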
  

\begin{proof}  
	We begin by making some preliminary preparations.  
	Denote $\mathbb{P}^4 \times \mathbb{P}^1$ by $X$ for brevity.  
	Consider the long exact sequence of cohomology groups induced by the exponential sheaf exact sequence on $X$:  
	\begin{equation*}  
		\dots \rightarrow H^1(X, \mathcal{O}_X) \rightarrow H^1(X, \mathcal{O}_X^*) \stackrel{c_1}{\rightarrow} H^2(X, \mathbb{Z}) \rightarrow H^2(X, \mathcal{O}_X) \rightarrow \dots  
	\end{equation*}  
	Since $H^1(X, \mathcal{O}_X)$ and $H^2(X, \mathcal{O}_X)$ are both zero, the morphism $c_1$ is an isomorphism,  
	and $H^2(X, \mathbb{Z})$ is generated by $\tilde{\alpha}$ and $\tilde{\beta}$.  
	
	Note that $V_1 := [y_0x_4 - y_1x_3 = 0]$ and $V_2 := [y_0g(x) + y_1h(x) = 0]$ are divisors in $X$,  
	therefore the intersection numbers on $Y$ can be computed on $X$.  
	\begin{equation*}  
		\begin{aligned}  
			\int_Y \alpha \wedge \beta \wedge \beta &= \int_X [Y] \wedge \tilde{\alpha} \wedge \tilde{\beta} \wedge \tilde{\beta} \\  
			&= \int_X c_1(\mathcal{O}_X(V_1)) \wedge c_1(\mathcal{O}_X(V_2)) \wedge \tilde{\alpha} \wedge \tilde{\beta} \wedge \tilde{\beta}, \\  
			\int_Y \beta \wedge \beta \wedge \beta &= \int_X [Y] \wedge \tilde{\beta} \wedge \tilde{\beta} \wedge \tilde{\beta} \\  
			&= \int_X c_1(\mathcal{O}_X(V_1)) \wedge c_1(\mathcal{O}_X(V_2)) \wedge \tilde{\beta} \wedge \tilde{\beta} \wedge \tilde{\beta}.  
		\end{aligned}  
	\end{equation*}  
	Thus, we need to compute the first Chern classes of divisors of bihomogeneous polynomials on $X$.  
	
	Let $u(x)$ be a homogeneous polynomial of degree $k$,  
	$v(y)$ be a homogeneous polynomial of degree $l$,  
	and $w(x, y)$ be a bihomogeneous polynomial of degree $k$ in $x$ and $l$ in $y$.  
	Denote  
	\begin{equation*}  
		D_1 := [u(x) = 0], \quad D_2 := [v(y) = 0], \quad D_3 := [w(x, y) = 0].  
	\end{equation*}  
	Observe that $X$ has the following affine open cover  
	\begin{equation*}  
		\{W_{ij} := (x_i \neq 0) \cap (y_j \neq 0)\}_{i=0,1,2,3,4; \, j=0,1}.  
	\end{equation*}  
	
	By examining the transition functions of the line bundle $\mathcal{O}_X(D_1)$ over this open cover,  
	we see that $\mathcal{O}_X(D_1)$ is isomorphic to $\pi_1^*(\mathcal{O}_{\mathbb{P}^4}(k))$.  
	Indeed, the transition functions of $\mathcal{O}_X(D_1)$ are  
	\begin{equation*}  
		\{g_{ij,i'j'} = \frac{x_{i'}^k}{x_i^k}\}_{ij,i'j'}.  
	\end{equation*}  
	Hence, by the properties of Chern classes,  
	\begin{equation*}  
		c_1(\mathcal{O}_X(D_1)) = k\tilde{\beta}.  
	\end{equation*}  
	Similarly, $\mathcal{O}_X(D_2)$ is isomorphic to $\pi_2^*(\mathcal{O}_{\mathbb{P}^1}(l))$,  
	and thus  
	\begin{equation*}  
		c_1(\mathcal{O}_X(D_2)) = l\tilde{\alpha}.  
	\end{equation*}  
	Likewise, the transition functions of $\mathcal{O}_X(D_3)$ are  
	\begin{equation*}  
		\{\hat{g}_{ij,i'j'} = \frac{x_{i'}^k y_{j'}^l}{x_i^k y_j^l}\}_{ij,i'j'},  
	\end{equation*}  
	and $\mathcal{O}_X(D_3)$ is isomorphic to $\mathcal{O}_X(D_1 + D_2)$. Therefore,  
	\begin{equation*}  
		c_1(\mathcal{O}_X(D_3)) = c_1(\mathcal{O}_X(D_1)) + c_1(\mathcal{O}_X(D_2)) = l\tilde{\alpha} + k\tilde{\beta}.  
	\end{equation*}  
	From the definitions of $V_1$ and $V_2$, we obtain  
	\begin{equation*}  
		\begin{aligned}  
			c_1(\mathcal{O}_X(V_1)) &= \tilde{\alpha} + \tilde{\beta}, \\  
			c_1(\mathcal{O}_X(V_2)) &= \tilde{\alpha} + 4\tilde{\beta}.  
		\end{aligned}  
	\end{equation*}  
	
	Now we compute the intersection numbers from the lemma.  
	\begin{equation*}  
		\begin{aligned}  
			\int_Y \alpha \wedge \beta \wedge \beta &= \int_X [Y] \wedge \tilde{\alpha} \wedge \tilde{\beta} \wedge \tilde{\beta} \\  
			&= \int_X (\tilde{\alpha} + \tilde{\beta}) \wedge (\tilde{\alpha} + 4\tilde{\beta}) \wedge \tilde{\alpha} \wedge \tilde{\beta} \wedge \tilde{\beta} \\  
			&= 4 \int_X \tilde{\beta}^4 \wedge \tilde{\alpha} \\  
			&= 4.  
		\end{aligned}  
	\end{equation*}  
	\begin{equation*}  
		\begin{aligned}  
			\int_Y \beta \wedge \beta \wedge \beta &= \int_X [Y] \wedge \tilde{\beta} \wedge \tilde{\beta} \wedge \tilde{\beta} \\  
			&= \int_X (\tilde{\alpha} + \tilde{\beta}) \wedge (\tilde{\alpha} + 4\tilde{\beta}) \wedge \tilde{\beta} \wedge \tilde{\beta} \wedge \tilde{\beta} \\  
			&= 5 \int_X \tilde{\beta}^4 \wedge \tilde{\alpha} \\  
			&= 5.  
		\end{aligned}  
	\end{equation*}  
\end{proof}

%第四节，得出主要结果
\section{Boundary of the balanced cone}

Let $\alpha$ be a generator of the boundary of $\mathcal{E}(Y)$.  
We need to find a generator of another boundary ray of $\mathcal{E}(Y)$.  
Since $\beta$ is a big class, the generator of the other boundary ray of $\mathcal{E}(Y)$ has the form  
\begin{equation*}  
	\xi = \beta - c\alpha,  
\end{equation*}  
where $c$ is a positive constant to be determined.  

Since the Hodge conjecture holds for $(1,1)$-classes, if we can find the classes corresponding to all effective divisors of $Y$, we can determine the boundary of the pseudo-effective cone.  
Furthermore, since every effective divisor class is a non-negative linear combination of prime divisor classes, it suffices to determine all prime divisor classes on $Y$.  

From the defining equation (\ref{definition_Y}) of $Y$, we know that $Y$ has an affine open subset  
\begin{equation*}  
	Y \cap W_{00} \cong \{(\tilde{x},\tilde{y}) \in \mathbb{C}^5 \mid \tilde{x}_4 - \tilde{y}_1\tilde{x}_3 = g(\tilde{x}) + \tilde{y}_1h(\tilde{x}) = 0\}.  
\end{equation*}  
Here, $\tilde{x} = (\tilde{x}_1, \tilde{x}_2, \tilde{x}_3, \tilde{x}_4)$ and $\tilde{y} = (\tilde{y}_1)$ represent the non-homogeneous coordinates of the homogeneous coordinates $x$ and $y$ respectively, on $W_{00}$.  

In particular, the following divisor relations hold on $Y \cap W_{00}$:  
\begin{equation*}  
	\begin{aligned}  
		[\tilde{x}_4 = 0] &= [\tilde{y}_1 = 0] + [\tilde{x}_3 = 0], \\  
		[g(\tilde{x}) = 0] &= [\tilde{y}_1 = 0] + [h(\tilde{x}) = 0].  
	\end{aligned}  
\end{equation*}  

Note that $E_1 = [x_3 = x_4 = 0]$ and $E_2 = [g(x) = h(x) = 0]$ are the closures in $Y$ of the divisors $[\tilde{x}_3 = 0]$ and $[h(\tilde{x}) = 0]$ respectively.  
By the Jacobian criterion and the genericity of $g$ and $h$, $[\tilde{x}_3 = 0]$ and $[h(\tilde{x}) = 0]$ are both smooth and irreducible, hence they are prime divisors in $Y \cap W_{00}$.  
Thus, their closures $E_1$ and $E_2$ are also prime divisors in $Y$, and we have  
\begin{equation*}  
	\begin{aligned}  
		[x_4 = 0] &= [y_1 = 0] + E_1, \\  
		[g(x) = 0] &= [y_1 = 0] + E_2.  
	\end{aligned}  
\end{equation*}  

From this, we obtain  
\begin{equation*}  
	\begin{aligned}  
		c_1(\mathcal{O}_Y(E_1)) &= \beta - \alpha, \\  
		c_1(\mathcal{O}_Y(E_2)) &= 4\beta - \alpha.  
	\end{aligned}  
\end{equation*}  

\begin{lemma}\label{primeDivisorClass}  
	Every prime divisor in $Y$ corresponds to a first Chern class lying in the closed convex cone $\mathrm{span}_{\mathbb{R}_{\geqslant 0}}\{\alpha, \beta - \alpha\}$.  
\end{lemma}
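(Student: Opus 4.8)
Here is the proof I would give.

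The plan is to pin the class of an arbitrary prime divisor between $\alpha$ and $\beta-\alpha$ using only the two numbers of Lemma~\ref{intersection_number}, the nefness of $\beta$, and the fact — already established in the paragraph before the lemma — that $E_1$ is a prime divisor with $c_1(\mathcal{O}_Y(E_1))=\beta-\alpha$. Write the class of a prime divisor $D$ as $[D]:=c_1(\mathcal{O}_Y(D))=a\alpha+b\beta$ with $a,b\in\mathbb{R}$. Since
\[
a\alpha+b\beta=(a+b)\,\alpha+b\,(\beta-\alpha),
\]
the assertion $[D]\in\mathrm{span}_{\mathbb{R}_{\geqslant 0}}\{\alpha,\beta-\alpha\}$ is \emph{equivalent} to the two inequalities $b\geqslant 0$ and $a+b\geqslant 0$. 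Besides $\int_Y\alpha\wedge\beta\wedge\beta=4$ and $\int_Y\beta\wedge\beta\wedge\beta=5$ from Lemma~\ref{intersection_number}, I will use $\int_Y\alpha\wedge\alpha\wedge\beta=0$, which is immediate since $\alpha=i^*\pi_2^*[\omega_{FS,\,\mathbb{P}^1}]$ and $[\omega_{FS,\,\mathbb{P}^1}]$ squares to zero on $\mathbb{P}^1$; recall also that $\beta\in\overline{\mathcal{K}(Y)}$ is nef.

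\textbf{Step 1.} We show $b\geqslant 0$. The generic fibre $F$ of $\pi_2|_Y\colon Y\to\mathbb{P}^1$ is a smooth quartic surface in a $\mathbb{P}^3\subset\mathbb{P}^4$, with $[F]=\alpha$, and $\beta|_F$ is nef on $F$. If $D$ is not contained in a fibre, then for generic $F$ the restriction $D|_F$ is an effective divisor on $F$, so
\[
4b=\int_Y[D]\wedge\alpha\wedge\beta=(D|_F)\cdot(\beta|_F)\geqslant 0;
\]
if $D$ is contained in a fibre it is disjoint from the generic $F$ and the same integral is $0$. Either way $b\geqslant 0$. (Equivalently: $\alpha\wedge\beta$ is the class of the curve $F\cap H$ with $H$ a generic hyperplane section of $Y$; these curves cover $Y$, so $\alpha\wedge\beta$ is a movable curve class and pairs non-negatively with the pseudo-effective class $[D]$.)

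\textbf{Step 2.} We show $a+b\geqslant 0$. If $D=E_1$ then $[D]=\beta-\alpha$, so $a+b=0$; assume therefore $D\neq E_1$. Then $D$ meets the prime divisor $E_1$ properly, so $D|_{E_1}$ is an effective (possibly zero) divisor on the irreducible projective surface $E_1$. As $\beta$ is nef on $Y$, $\beta|_{E_1}$ is nef on $E_1$ and hence pairs non-negatively with $D|_{E_1}$; using $c_1(\mathcal{O}_Y(E_1))=\beta-\alpha$ and the intersection numbers,
\[
4a+b=\int_Y[D]\wedge(\beta-\alpha)\wedge\beta=(D|_{E_1})\cdot(\beta|_{E_1})\geqslant 0.
\]
Combining this with $b\geqslant 0$ from Step 1 gives $a+b=\tfrac14(4a+b)+\tfrac34 b\geqslant 0$. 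By the reduction in the first paragraph this proves the lemma.

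The only point that genuinely requires care is Step 2: on the boundary ray one has $4a+b=-3<0$ (namely for $E_1$ itself), so $E_1$ must be removed from the restriction argument and dealt with by hand; and one should observe that intersecting the nef class $\beta|_{E_1}$ with an effective divisor on the a priori singular surface $E_1$ is still non-negative, since a nef line bundle has non-negative degree on every curve. Everything else is arithmetic with the two numbers of Lemma~\ref{intersection_number}.
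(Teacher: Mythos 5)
Your proof is correct and follows essentially the same route as the paper's: the key step in both is that a prime divisor $D\neq E_1$ satisfies $\int_Y [D]\wedge c_1(\mathcal{O}_Y(E_1))\wedge\gamma\geqslant 0$ for a suitable positive class $\gamma$, combined with the non-negativity of the $\beta$-coefficient. The only differences are cosmetic: you take $\gamma=\beta$ (merely nef, justified by restricting to $E_1$) where the paper takes the ample class proportional to $\tfrac{3}{4}\alpha+\beta$ so that positivity is immediate and $a_1+a_2\geqslant 0$ drops out in one step, and you justify $b\geqslant 0$ by pairing with the movable curve class $\alpha\wedge\beta$ where the paper simply invokes $\alpha\in\partial\mathcal{E}(Y)$, $\beta\in\mathcal{E}^{\circ}(Y)$.
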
  

\begin{proof}  
	Let $D_0$ be a prime divisor. Since we only need to find prime divisor classes different from $E_1$, assume without loss of generality that $D_0 \neq E_1$. Denote  
	\begin{equation*}  
		c_1(\mathcal{O}_Y(D_0)) = a_1\alpha + a_2\beta.  \end{equation*}  
	Since $D_0$ and $E_1$ are distinct prime divisors, for a holomorphic line bundle $A$ whose first Chern class is ample, the following inequality holds:  
	\begin{equation}\label{inequality}  
		\int_Y{c_1(\mathcal{O}_Y(D_0)) \wedge c_1(\mathcal{O}_Y(E_1)) \wedge c_1(A)} \geqslant 0.  
	\end{equation}  
	Let $c_1(A) = C_1\alpha + C_2\beta$, where $C_1 > 0$ and $C_2 > 0$. The left-hand side of inequality (\ref{inequality}) is computed as follows:  
	\begin{equation}\label{LHSRHS}  
		\begin{aligned}  
			\mathrm{LHS} &= \int_Y{(a_1\alpha + a_2\beta) \wedge (-\alpha + \beta) \wedge (C_1\alpha + C_2\beta)} \\  
			&= a_2C_2\int_Y{\beta \wedge \beta \wedge \beta} + (a_1C_2 - a_2C_2 + a_2C_1)\int_Y{\alpha \wedge \beta \wedge \beta} \\  
			&= a_2C_2 + 4a_2C_1 + 4a_1C_2.  
		\end{aligned}  
	\end{equation}  
	Without loss of generality, take  
	\begin{equation}\label{take_special_ample}  
		C_1 = \frac{3}{4}C_2 > 0.  
	\end{equation}  
	Substituting (\ref{take_special_ample}) into (\ref{LHSRHS}), and using inequality (\ref{inequality}), we obtain  
	\begin{equation*}  
		4(a_1 + a_2)C_2 \geqslant 0.  
	\end{equation*}  
	Thus,  
	\begin{equation*}  
		a_1 + a_2 \geqslant 0.  
	\end{equation*}  
	On the other hand, since $D_0$ is an effective divisor, and $\alpha \in \partial\mathcal{E}(Y)$, $\beta \in \mathcal{E}^\circ(Y)$, it follows that $a_2 \geqslant 0$. Therefore,  
	\begin{equation*}  
		\begin{aligned}  
			c_1(\mathcal{O}_Y(D_0)) &= a_1\alpha + a_2\beta \\  
			&= a_2(\beta - \alpha) + (a_1 + a_2)\alpha \in \mathrm{span}_{\mathbb{R}_{\geqslant 0}}\{\alpha, \beta - \alpha\}.  
		\end{aligned}  
	\end{equation*}  
\end{proof}

By the previous lemma, we have
\begin{proposition}  
	$\mathcal{E}(Y)$ coincides with the closed convex cone $\mathrm{span}_{\mathbb{R}_{\geqslant 0}}\{\alpha, \beta - \alpha\}$.  
\end{proposition}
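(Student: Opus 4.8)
The plan is to prove the two inclusions $\mathcal{E}(Y)\subseteq\sigma$ and $\sigma\subseteq\mathcal{E}(Y)$, where $\sigma:=\mathrm{span}_{\mathbb{R}_{\geqslant 0}}\{\alpha,\beta-\alpha\}$ denotes the closed convex cone in the statement. Since $Y$ is projective with $h^{1,1}(Y)=2$, the Néron--Severi space $NS(Y)_{\mathbb{R}}$ coincides with $H^{1,1}(Y,\mathbb{R})$, so $\mathcal{E}(Y)$ is by definition the closure of the convex cone generated by the classes of effective divisors on $Y$; this identification is what lets us reduce everything to the divisor-class computations already in hand.

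For the inclusion $\mathcal{E}(Y)\subseteq\sigma$, I would argue as follows. Every effective divisor is a non-negative integer combination of prime divisors, so its class lies in the convex cone generated by prime divisor classes. By Lemma~\ref{primeDivisorClass} each prime divisor class lies in $\sigma$, and since $\sigma$ is convex the whole cone generated by effective divisor classes is contained in $\sigma$. As $\sigma$ is closed, it also contains the closure of that cone, which is exactly $\mathcal{E}(Y)$. Thus $\mathcal{E}(Y)\subseteq\sigma$.

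For the reverse inclusion $\sigma\subseteq\mathcal{E}(Y)$, since $\mathcal{E}(Y)$ is itself a convex cone it suffices to check that the two generators $\alpha$ and $\beta-\alpha$ of $\sigma$ lie in $\mathcal{E}(Y)$. The class $\beta-\alpha$ equals $c_1(\mathcal{O}_Y(E_1))$, the class of the prime — in particular effective — divisor $E_1$, hence it is pseudo-effective. The class $\alpha$ is the class of the effective divisor $(\pi_2\circ i)^{-1}(p)$ for a point $p\in\mathbb{P}^1$ (a genuine divisor of $Y$, as $\pi_2\circ i\colon Y\to\mathbb{P}^1$ is surjective), hence also pseudo-effective; alternatively one may simply invoke $\alpha\in\partial\mathcal{E}(Y)$ recorded above. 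Therefore $\sigma\subseteq\mathcal{E}(Y)$, and combined with the previous paragraph this gives the equality.

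The substantive content of the argument is entirely carried by Lemma~\ref{primeDivisorClass}; once that is granted there is no real obstacle left. The only points deserving a moment's care are that effective divisor classes are dense in $\mathcal{E}(Y)$ — so that controlling all prime divisor classes really pins down the whole pseudo-effective cone and not merely the effective cone — and that $\sigma$ is closed, so that passing to the closure in the first inclusion does not escape $\sigma$; both are immediate from the definitions.
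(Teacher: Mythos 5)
Your proposal is correct and follows essentially the same route as the paper, which derives the proposition directly from Lemma~\ref{primeDivisorClass} together with the facts that $\beta-\alpha=c_1(\mathcal{O}_Y(E_1))$ is an effective class and $\alpha\in\partial\mathcal{E}(Y)$. You merely make explicit the standard reduction (pseudo-effective cone $=$ closure of the cone spanned by prime divisor classes, valid here since $NS(Y)_{\mathbb{R}}=H^{1,1}(Y,\mathbb{R})$) that the paper leaves implicit.
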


Now we need the following lemma.

\begin{lemma}\label{dual_cone_bdry_relation}  
	Let $C$ be an open convex cone with vertex at $0$ in an $n$-dimensional real vector space $V$, and let $C^{\vee}$ denote its dual convex cone. Then, for any nonzero $x \in \partial C$, there exists a nonzero $G \in \partial (C^{\vee})$ such that  
	\begin{equation*}  
		G(x) = 0.  
	\end{equation*}  
\end{lemma}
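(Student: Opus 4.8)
The plan is to use the separating hyperplane theorem for convex sets, combined with a dimension count on the face of $C^\vee$ that the separating functional lies in. First I would recall the relevant definitions: since $C$ is an open convex cone, $\partial C$ consists of nonzero points not in $C$; and the dual cone is $C^\vee = \{G \in V^* \mid G(y) \geqslant 0 \text{ for all } y \in \overline{C}\}$. Note that $C^\vee$ is automatically a closed convex cone, and because $C$ is open (hence has nonempty interior and is not contained in any hyperplane), $C^\vee$ is pointed and $\overline{C} = (C^\vee)^\vee$ by biduality.

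Given a nonzero $x \in \partial C$, the key point is that $x \notin C$ but $x \in \overline{C}$. Since $C$ is open and convex and $x \notin C$, by the Hahn--Banach separation theorem there is a nonzero linear functional $G \in V^*$ with $G(y) \geqslant G(x)$ for all $y \in C$, hence for all $y \in \overline{C}$. Because $C$ is a cone, taking $y = ty$ for $y \in C$ and $t \to 0^+$ forces $G(x) \leqslant 0$; taking $t \to \infty$ (if $G(y) < 0$ for some $y$) would violate the bound, so in fact $G(y) \geqslant 0$ for all $y \in \overline{C}$, i.e. $G \in C^\vee$, and letting $t \to 0^+$ gives $G(x) \geqslant 0$ as well. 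Combined with $x \in \overline{C}$ giving $G(x) \geqslant 0$ directly from $G \in C^\vee$, and $G(x) \leqslant 0$ from the separation, we conclude $G(x) = 0$. The functional $G$ is nonzero by construction.

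It remains to show $G \in \partial(C^\vee)$, i.e. that $G$ is not an interior point of $C^\vee$. Here I would argue by contradiction: if $G$ lay in the interior of $C^\vee$, then $G(y) > 0$ for every nonzero $y \in \overline{C}$ (this is the standard characterization of the interior of the dual cone of a full-dimensional cone — an interior functional is strictly positive on $\overline{C} \setminus \{0\}$). But $x$ is a nonzero element of $\overline{C}$ with $G(x) = 0$, a contradiction. Hence $G \in \partial(C^\vee)$, completing the proof.

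The main obstacle is making the topological statements about the interior of the dual cone precise — specifically the claim that $G \in \operatorname{int}(C^\vee)$ implies $G > 0$ on $\overline{C}\setminus\{0\}$. This is where the full-dimensionality of $C$ (equivalently, the pointedness of $C^\vee$) is used, and one should be slightly careful: in the statement, $C$ is an arbitrary open convex cone, so it is full-dimensional in $V$, which is exactly what is needed. If one wanted to avoid invoking that characterization, an alternative is to note that $\{H \in V^* \mid H(x) = 0\}$ is a hyperplane through $G$ (since $G(x) = 0$) that does not meet $\operatorname{int}(C^\vee)$ — because any $H \in \operatorname{int}(C^\vee)$ satisfies $H(x) > 0$ for $x \in \overline{C}\setminus\{0\}$ — so $G$, lying on this hyperplane, cannot be interior. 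Either way the crux is the same elementary convex-geometry fact, and everything else is a direct application of Hahn--Banach plus the cone structure.
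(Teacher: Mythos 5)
Your proof is correct and follows essentially the same route as the paper: Hahn--Banach separation produces a nonzero $G \in C^{\vee}$ vanishing at $x$, and $G$ is then shown to lie on $\partial(C^{\vee})$ because it kills a nonzero point of $\overline{C}$. The only cosmetic differences are that the paper separates $C$ from the whole line $\mathrm{span}_{\mathbb{R}}\{x\}$ (so $G(x)=0$ is immediate) and exhibits the boundary property via an explicit perturbation $G+H_{\varepsilon}\notin C^{\vee}$, which is exactly the one direction of the interior-of-dual-cone characterization you invoke.
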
  

\begin{proof}  
	Assume $x \in \partial C$ is nonzero. Since $C$ is an open convex cone, we apply the Hahn-Banach Separation Theorem (Theorem 3.4 in \cite{rudin1973functional}) to $C$ and $\mathrm{span}_{\mathbb{R}}\{x\}$. Consequently, there exists a linear map $F: V \rightarrow \mathbb{R}$ and a real number $\gamma$ such that for any $\hat{x} \in \mathrm{span}_{\mathbb{R}}\{x\}$ and any $y \in C$,  
	\begin{equation*}  
		F(y) < \gamma \leq F(\hat{x}).  
	\end{equation*}  
	In this case, $F(\hat{x}) = \gamma = 0$. Setting $G = -F$, it follows from the properties of $F$ that $G \in C^{\vee}$.  
	
	We need to show that $G \in \partial (C^{\vee})$. Let $\{u_1, \ldots, u_{n-1}, x\}$ be a basis for $V$. For each $\varepsilon > 0$, define a linear map $H_{\varepsilon}: V \rightarrow \mathbb{R}$ such that for any $(a_1, \ldots, a_n) \in \mathbb{R}^n$,  
	\begin{equation*}  
		H_{\varepsilon}\left(\sum_{i=1}^{n-1}{a_iu_i} + a_nx\right) = -\varepsilon a_n.  
	\end{equation*}  
	Thus, for any $\varepsilon > 0$,  
	\begin{equation*}  
		G + H_{\varepsilon} \notin C^{\vee}.  
	\end{equation*}  
	On the other hand,  
	\begin{equation*}  
		G = \lim_{\varepsilon \rightarrow 0} (G + H_{\varepsilon}) \in C^{\vee}.  
	\end{equation*}  
	Therefore, $G \in \partial (C^{\vee})$ and, by the above construction, $G$ is nonzero.  
\end{proof}

Now, by the conclusions from \cite{BDPP}, \cite{fu2014relations}, and \cite{Toma10}, we derive the other boundary ray of $\mathcal{B}(Y)$.  

Indeed, by Lemma \ref{dual_cone_bdry_relation} and the fact that $\overline{\mathcal{B}}(Y)$ is the dual convex cone of $\mathcal{E}^\circ(Y)$, it suffices to consider the intersection numbers of boundary elements of $\mathcal{E}^\circ(Y)$ with the basis of the $(2,2)$-class.  

From  
\begin{equation*}  
	\int_Y \alpha \wedge (\alpha \wedge \beta) = 0,  
\end{equation*}  
and since $\alpha \wedge \beta$ is semi-positive, we reconfirm that $\alpha \wedge \beta$ generates a boundary ray of $\mathcal{B}(Y)$. Given that $\beta \wedge \beta \in \mathcal{B}(Y)$, the generator of the other boundary ray of $\mathcal{B}(Y)$ has the form  
\begin{equation*}  
	\beta \wedge \beta - c \alpha \wedge \beta,  
\end{equation*}  
where the positive constant $c$ can be determined now. 

By Lemma \ref{dual_cone_bdry_relation} and Lemma \ref{intersection_number},  
\begin{equation*}  
	\begin{aligned}  
		0 &= \int_Y (\beta - \alpha) \wedge (\beta \wedge \beta - c \alpha \wedge \beta) \\  
		&= \int_Y \beta \wedge \beta \wedge \beta - \int_Y \alpha \wedge \beta \wedge \beta - c \int_Y \beta \wedge \alpha \wedge \beta \\  
		&= 5 - 4 - 4c \\  
		&= 1 - 4c.  
	\end{aligned}  
\end{equation*}  
Thus, we obtain $c = \frac{1}{4}$, and therefore, the generator of the other boundary ray of $\mathcal{B}(Y)$ is  
\begin{equation*}  
	\beta \wedge \beta - \frac{1}{4} \alpha \wedge \beta.  
\end{equation*}  

This completes the proof of the main theorem.

\bibliographystyle{abbrv}

\end{document}